\newcommand{\R}{\mbox{$I \kern -4pt R$}}
\newtheorem{theorem}{Theorem}
\newtheorem{definition}[theorem]{Definition}
\newtheorem{lemma}[theorem]{Lemma}
\newenvironment{proof}[1][Proof]{\noindent\textbf{#1.} }{\ \rule{0.5em}{0.5em}}
\def\span{{\rm span \;}}
\begin{document}

\title{A generalized empirical interpolation method: \\ application of reduced basis techniques to data assimilation}
\author{Y. Maday$^{1, 2, 3}$ and O. Mula$^{1,4,5}$\\
{$^1$ {\small UPMC Univ Paris 06, UMR 7598, Laboratoire Jacques-Louis Lions, F-75005, Paris, France.}}\\
{$^2$ {\small Institut Universitaire de France.}}\\{$^3$ {\small Brown Univ, Division of Applied Maths, Providence, RI, USA.}}\\{$^4$ {\small CEA Saclay - DEN/DANS/DM2S/SERMA/LLPR - 91191 Gif-Sur-Yvette CEDEX - France}}\\{$^5$ {\small LRC MANON,  Lab. de Recherche Conventionn{\'e}e CEA/DEN/DANS/DM2S and UPMC-CNRS/LJLL.}}\\
{\small maday@ann.jussieu.fr} , {\small olga.mulahernandez@cea.fr}}
\date{}
\maketitle
\section{Introduction}
The representation of some physical or mechanical quantities, representing a scalar or vectorial function that depends on space, time or both, can be elaborated through at least  two --  possibly -- complementary approaches: the first one, called explicit hereafter, is based on the measurement of some instances of the quantity of interest  that consists in getting its value  at some points from which, by interpolation or extrapolation, the quantity is approximated in other points than where the measurements have been performed. The second approach, called implicit hereafter, is more elaborated. It is based on a model, constructed by expertise, that implicitly characterizes the quantity as a solution to some problem fed with input data. The model can e.g. be a parameter dependent partial differential equation, the simulation of which allows to get an approximation of the quantity of interest, and, actually,  many more outputs than the sole value of the quantity of interest. This second approach, when available, is more attractive since it allows to have a better understanding of the working behavior of the
phenomenon that is under consideration. In turn, it facilitates optimization, control or decision making.

Nevertheless for still a large number of problems, the numerical simulation of this model is indeed possible --- though far too expensive to be
performed in a reasonable enough time. The combined efforts  of numerical analysts, specialists of algorithms and computer scientists, together with the increase of the performances of the computers allow to increase every days the domains of application where numerical simulation can be used, to such an extent that it is possible now to rigorously adapt the approximation, degrade the models, degrade the simulation, or both in an intelligent way without sacrificing the quality of the approximation where it is required.

Among the various ways to reduce the problem's complexity stand approaches that use the smallness of
 the Kolmogorov $n$-width \cite{kolmo} of the manifold of all solutions considered when the parameters varies continuously in some range. This idea, combined with the Galerkin method is at the basis of the reduced basis method and the Proper Orthogonal Decomposition (POD) methods to solve parameter dependent partial differential equations. These approximation methods allow to build the solution to the model associated to some parameter as a linear combination of some precomputed solutions associated to some well chosen parameters. The precomputations can be lengthy but are performed off-line, the online computation has a very small complexity, based on the smallness of the Kolmogorov $n$-width. We refer to \cite{PRbook}\cite {prud'homme02:_reliab_real_time_solut_param}  for an introduction to these approaches.

Another possibility, rooted on the same idea, is the empirical interpolation method (EIM) that allows, from values of the quantity at some interpolating points, to build a linear combination of again preliminary fully determined quantities associated to few well chosen instances of the parameter. The linear combination is determined in such a way that it takes the same values at the interpolating points as the quantity we want to represent. This concept generalizes the classical -- e.g. polynomial or radial basis -- interpolation procedure and is recalled in the next section. The main difference is that the interpolating function are not a priori known but depend on the quantity we want to represent.

In this paper we first aim at generalizing further this EIM concept by replacing the pointwise evaluations of the quantity by more general measures, mathematically defined as linear forms defined on a superspace of the manifold of appropriate functions. We consider that this generalization, named Generalized Empirical Interpolation Method (GEIM), represents already an improvement with respect  to classical interpolation reconstructions.

Bouncing on this GEIM, we propose a coupled approach based on the domain decomposition of the computational domain into two parts: one small domain $\Omega_1$ where the 
Kolmogorov $n$-width of the manifold is not small and where the parametrized PDE will be simulated and the other subdomain $\Omega_2$ , much larger but with a small Kolmogorov $n$-width  
because for instance  the solution is driven over $\Omega_2$ by the behavior of the solution over $\Omega_1$. The idea is then to first construct (an approximation of) the solution from the measurements using the GEIM. In turn this reconstruction, up to the interface between $\Omega_1$ and $\Omega_2$, provides the necessary boundary conditions for solving the model over $\Omega_1$.

This is not the first attempt to use the small Kolmogorov width for another aim than the POD or reduced basis technique which are both based on a Galerkin approach. In \cite{maday1} e.g. the smallness of the Kolmogorov width is used to post-process a coarse finite element approximation and get an improved  accuracy.

The problems we want to address with this coupled approach, stem from, e.g., actual
industrial process or operations that work on a day-to-day basis; they can be observed with
experimental sensors that provide sound data and are able to characterize part of their
working behavior. 
We think that the numerical simulation and data mining approaches for analyzing real life systems are not
enough merged in order to (i) complement their strength and (ii) cope for their weaknesses. This paper is a contribution in this direction.

In the last section, we evoke the problem of uncertainty and noises in the acquisition of the data, since indeed, the data are most often polluted by noises. Due to this, statistical data
acquisition methods are used to filter out the source signals so that an improved knowledge is
accessible.
In many cases though, and this is more and more the case now, the data are far too
numerous to all be taken into account, most of them are thus neglected because people do
not know how to analyze them, in particular when the measures that are recorded are not
directly related to some directly understandable quantity.

\section{Generalized Empirical Interpolation Method}

 The rationale of all our approach relies on the possibility to approximately represent a given set, portion of a regular manifold (here the set of solution to some PDE), as a linear combination of very few computable elements. This is linked to the
  notion of $n$-width following Kolmogorov  \cite{kolmo}: 
\begin{definition}
Let $F$ be a subset of some Banach space ${\cal X}$ and $Y_n$ be a generic  $n$-dimensional subspace of ${\cal X}$. The angle between $F$ and $Y_n$ is
$$ E(F;Y_n) \coloneqq \sup_{x\in F}\inf_{y\in Y_n} \|x-y\|_{\cal X}.$$
The {\it Kolmogorov $n$-width} of $F$ in ${\cal X}$ is given by 

\begin{eqnarray}
d_n(F,{\cal X})&:= \inf\{E(F;Y_n) : Y_n \hbox{ a $n$-dimensional subspace of {\cal X}}\} \nonumber
\\
&=  \inf_{Y_n}\sup_{x\in F}\inf_{y\in Y_n} \|x-y\|_{\cal X}\ .
\end{eqnarray}

\end{definition}
The $n$-width of $F$ thus measures to what extent the set $F$ can be approximated by an $n$-dimensional subspace of ${\cal X}$. 

We assume from now on that $F$ and ${\cal X}$ are composed of functions defined over a domain $\Omega\subset \R^d$, where $d=1, 2,3$ and that $F$ is a compact set of ${\cal X}$.

\subsection{Recall of the Empirical Interpolation Method}

We begin by describing the construction of the empirical interpolation method (\cite{barrault04:_empir_inter_method}, 
\cite{m2an_magic}, \cite{magic}) that
allows us to define simultaneously the set of generating functions recursively chosen in $F$ together with the associated interpolation points. It is based on a
greedy selection procedure as outlined
in~\cite{nguyen04:_handb_mater_model,prud'homme02:_reliab_real_time_solut_param,veroy03:_poster_error_bound_reduc_basis}.
%In what follows, we assume that the functions in ${\mathcal X}$ are at least continuous over the domain $\overline \Omega$. 
With ${\mathcal M}$ being some given large number, we assume that the dimension of the vectorial space spanned by $F$: $\span(F)$ is of dimension $\ge {\mathcal M}$.

The first generating function is $\varphi_1=  \arg \max_{\varphi \in F}
\|\varphi(\: \cdot \:)\|_{L^\infty(\Omega)}$, the associated
interpolation point satisfies $x_1 = \arg \max_{x \in \overline \Omega}
|\varphi_1(x)|$, we then set $q_1 = \varphi_1(\cdot)/\varphi_1(x_1)$ and $B_{11}^1 = 1$. We now
construct, by induction, the nested sets of interpolation points $\Xi_M =
\{x_1,\ldots,x_M\}, 1 \leq M \leq M_{\max},$ and the nested sets of basis
functions $\{q_1, \ldots, q_M\}$, where $M_{\max} \leq \mathcal{M}$ is some
given upper bound fixed {\em a priori}. For $M=2,\ldots,M_{\max}$, we first solve the interpolation problem: Find 

\begin{equation}
\label{1K10}
\mathcal{I}_{M-1}[\varphi(\cdot)] = \sum_{j=1}^{M-1} \alpha_{M-1,j}[\varphi] q_j \ ,
\end{equation}
such that
\begin{equation}
\label{1K10a}
\mathcal{I}_{M-1}[\varphi(\cdot)] (x_i) = \varphi(x_i), \quad i = 1, \ldots, M-1 \ ,
\end{equation}
that allows to define the $\alpha_{M-1,j}[\varphi], 1 \leq j \leq M,$ as it can be proven indeed that  the $(M-1)\times(M-1)$ matrix of running entry $q_j(x_i)$ is invertible, actually it is lower triangular with unity diagonal.

We then set
\begin{equation}
\label{eq5}
\forall \varphi \in F,\quad
\varepsilon_{M-1}(\varphi) = \|\varphi -\mathcal{I}_{M-1}[\varphi]\|_{L^\infty(\Omega)} \ ,
\end{equation}
and define
\begin{equation}
\varphi_M = \arg \max_{\varphi \in F} \varepsilon_{M-1}(\varphi) \ ,
\end{equation}
and
\begin{equation}
x_M = \arg \max_{x \in \overline \Omega} |\varphi_M(x) -\mathcal{J}_{M-1}[\varphi_M](x)| \ ,
\end{equation}
we finally set  $r_M(x) = \varphi_M(x)-\mathcal{J}_{M-1}[\varphi_M(x)]$, $q_M =
r_M/r_M(x_M)$ and $B^M_{ij} = q_j(x_i), 1 \leq i,j \leq M$.  

The
Lagrangian functions --- that can be used to build the interpolation operator
${\mathcal I}_M$ in $X_M$ $=$
$\span\{\varphi_i, 1\leq i\leq M\}$ $=$ $\span\{q_i, 1\leq i\leq M\}$
over the set of points $\Xi_M$ $=$ $\{x_i, 1\leq i\leq M\}$ --- verify  for any given $M$, ${\mathcal
I}_M[u(\: \cdot \: )]= \sum_{i=1}^M u(x_i) h^M_i(\: \cdot \:)$, where
$h^M_i(\: \cdot \:) = \sum_{j=1}^M q_j(\: \cdot \:) [B^{M}]^{-1}_{ji}$ (note indeed that $h_i^M(x_j) =\delta_{ij}$).

The error analysis of the interpolation procedure classically involves the
Lebesgue constant $\Lambda_M$ $=$ $\sup_{x\in\Omega}\sum_{i=1}^M$ $|h^M_i(x)|$.

\begin{lemma}
For any $\varphi\in F$, the interpolation error satisfies
\begin{equation}
\|\varphi-{\mathcal I}_M[\varphi]\|_{L^\infty(\Omega)}\leq (1+\Lambda_M) \inf_{\psi_M \in
X_M}\|\varphi-\psi_M\|_{L^\infty(\Omega)}.
\label{2K10}
\end{equation}
\end{lemma}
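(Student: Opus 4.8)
The plan is to exploit the fact that $\mathcal{I}_M$ is a linear projection onto $X_M$, combined with the operator-norm bound encoded in the Lebesgue constant $\Lambda_M$. First I would record the projection property: for any $\psi_M \in X_M$ one has $\mathcal{I}_M[\psi_M] = \psi_M$. This follows from the representation $\mathcal{I}_M[u] = \sum_{i=1}^M u(x_i)\, h_i^M$ together with $h_i^M(x_j) = \delta_{ij}$; indeed, writing $\psi_M = \sum_{j=1}^M c_j q_j$ and using that the matrix $B^M_{ij} = q_j(x_i)$ is invertible (lower triangular with unit diagonal, as noted in the construction) shows that the interpolant agreeing with $\psi_M$ at the points $\Xi_M$ must coincide with $\psi_M$ itself. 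Thus $\mathcal{I}_M$ restricts to the identity on $X_M$.

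Next I would establish the stability estimate $\|\mathcal{I}_M[u]\|_{L^\infty(\Omega)} \leq \Lambda_M \|u\|_{L^\infty(\Omega)}$ for any continuous $u$. This is immediate from the Lagrangian form: for each $x \in \Omega$,
\begin{equation}
|\mathcal{I}_M[u](x)| = \left| \sum_{i=1}^M u(x_i)\, h_i^M(x) \right| \leq \|u\|_{L^\infty(\Omega)} \sum_{i=1}^M |h_i^M(x)|,
\end{equation}
and taking the supremum over $x$ together with the definition $\Lambda_M = \sup_{x\in\Omega}\sum_{i=1}^M |h_i^M(x)|$ gives the bound.

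With these two ingredients the result follows by the classical Lebesgue argument. For an arbitrary $\psi_M \in X_M$, I would use linearity of $\mathcal{I}_M$ and the projection property $\mathcal{I}_M[\psi_M] = \psi_M$ to write $\varphi - \mathcal{I}_M[\varphi] = (\varphi - \psi_M) - \mathcal{I}_M[\varphi - \psi_M]$. The triangle inequality then yields
\begin{equation}
\|\varphi - \mathcal{I}_M[\varphi]\|_{L^\infty(\Omega)} \leq \|\varphi - \psi_M\|_{L^\infty(\Omega)} + \|\mathcal{I}_M[\varphi - \psi_M]\|_{L^\infty(\Omega)} \leq (1 + \Lambda_M)\|\varphi - \psi_M\|_{L^\infty(\Omega)},
\end{equation}
where the stability estimate was applied to $u = \varphi - \psi_M$. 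Since $\psi_M \in X_M$ was arbitrary, passing to the infimum over $X_M$ on the right-hand side delivers \eqref{2K10}.

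I do not expect a serious obstacle here, as the argument is the standard best-approximation/Lebesgue-constant estimate. The only point requiring care is the projection property in the first step: it hinges on the well-posedness (existence and uniqueness) of the interpolation problem on $X_M$, which in turn rests on the invertibility of $B^M$ guaranteed by the greedy construction. Everything else is a direct application of linearity and the triangle inequality, so the work lies in stating these structural facts cleanly rather than in any delicate estimate.
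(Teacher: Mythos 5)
Your proof is correct and follows essentially the same route as the paper, which establishes the analogous GEIM lemma via the projection property $\mathcal{I}_M[\psi_M]=\psi_M$, the decomposition $\varphi-\mathcal{I}_M[\varphi]=(\varphi-\psi_M)-\mathcal{I}_M[\varphi-\psi_M]$, the triangle inequality with the operator bound $\Lambda_M$, and the infimum over $\psi_M\in X_M$. Your additional verification of the stability estimate through the Lagrangian functions $h_i^M$ is a sound elaboration of what the paper treats as standard.
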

\noindent The last term in the right hand side of the above inequality is known as the best fit of $\varphi$ by elements in $X_M$.

\subsection{The generalization}

Let us assume now that we do not have access to the values of $\varphi\in F$ at points in $\Omega$ easily, but, on the contrary, that we have a dictionary of linear forms $\sigma \in \Sigma$ --- assumed to be continuous  in some sense, e.g. in  $L^2(\Omega)$ with norm 1 --- the application of which over each $\varphi\in F$ is easy. Our extension consists in defining $\tilde\varphi_1$, $\tilde\varphi_2$,\dots, $\tilde\varphi_M$ and a family of  associated linear forms $\sigma_1$, $\sigma_2$,\dots , $\sigma_M$ such that the following generalized  interpolation process (our GEIM) is well defined:
\begin{equation}
\label{eqGEIM}
{\cal J}_M[\varphi] = \sum_{j=1}^{M} \beta_j \tilde\varphi_j,\text{ such that } 
\forall i=1,\dots, M,\ \sigma_i({\cal J}_M[\varphi]) = \sigma_i(\varphi)
\end{equation}

Note that the GEIM reduces to the EIM when the dictionary is composed of dirac masses, defined in the dual space of ${\cal C}^0(\Omega)$.

As explained in the introduction, our generalization is motivated by the fact that, in practice, measurements provides outputs from function $\varphi$ that are some averages --- or some moments ---  of $\varphi$ over the actual size of the mechanical device that takes the measurement.

Among the questions raised by GEIM:
 \begin{itemize}
\item is there an optimal selection for the linear forms $\sigma_i$ within the dictionary $\Sigma$ ?
\item is there a constructive optimal selection for the functions $\tilde\varphi_i$?
\item given a set of linearly independent functions $\{\tilde\varphi_i\}_{i \in [1,M]}$ and a set of continuous linear forms $\{\sigma_i\}_{i \in [1,M]}$, does the interpolant (in the sense of (\ref{eqGEIM})) exist?
\item is the interpolant unique?
\item how does the interpolation process compares with other approximations (in particular orthogonal projections)?
\item Under what hypothesis can we expect the GEIM approximation to converge rapidly to $\varphi$?
 \end{itemize}

In what follows, we provide answers to these questions either with rigorous proofs or with numerical evidences. 

The construction of the generalized interpolation functions and linear forms is done recursively, following the same procedure as in the previous subsection, based on  a greedy approach, both for the construction of the interpolation linear forms $\tilde\varphi_i$ and the associated forms selected in the dictionary $\Sigma$:
The first interpolating function is, e.g.:
\begin{equation*}
\tilde \varphi_1 = \arg\ \underset{\varphi \in F}{\max} \Vert \varphi \Vert_{L^2(\Omega)},
\end{equation*}
the first interpolating linear form is:
\begin{equation*}
\sigma_1 = \arg\ \underset{\sigma\in  \Sigma}{\max} \vert \sigma (\varphi_1) \vert .
\end{equation*}
We then define the first basis function as: $\tilde q_1=\dfrac{\tilde \varphi_1}{\sigma_1 (\tilde \varphi_1)}$. The second interpolating function is:
\begin{equation*}
\tilde \varphi_2 = \arg\ \underset{\varphi \in F}{\max} \Vert \varphi -\sigma_1(\varphi) \tilde q_1     \Vert_{L^2(\Omega)} .
\end{equation*}
The second interpolating linear form is:
\begin{equation*}
\sigma_2 = \arg\ \underset{\sigma\in \cal L(\cal X)}{\max} \vert \sigma (\tilde\varphi_2 -\sigma_1(\tilde\varphi_2) \tilde q_1    ) \vert , 
\end{equation*}
and the second basis function is defined as: 
\begin{equation*}
\tilde q_2 =\dfrac{\tilde \varphi_2 -\sigma_1(\tilde \varphi_2) q_1}{\sigma_2(\tilde \varphi_2-\sigma_1(\tilde \varphi_2) q_1)},
\end{equation*}
and we proceed by induction: assuming that we have built  the set of interpolating functions $\{ \tilde q_1, \tilde q_2,\dots, \tilde q_{M-1} \}$ and the  set of associated interpolating linear forms $\{\sigma_1,\sigma_2,\dots,\sigma_{M-1} \}$, for $M>2$, we first solve the interpolation problem: find $\{\widetilde{\alpha_j^{M-1}}(\varphi)\}_j$ such that 
\begin{equation*}
\forall i=1,\dots,M-1,
\quad \sigma_i (\varphi)=\sum\limits_{j=1}^{M-1} \widetilde{\alpha_j^{M-1}}(\varphi) \sigma_i(\tilde q_j),\ 
\end{equation*}
and then compute:
\begin{equation*}
{\cal J}_{M-1}[\varphi]=\sum\limits_{j=1}^{M-1} \widetilde{\alpha_j^{M-1}}(\varphi) \tilde q_j
\end{equation*}
We then evaluate
\begin{equation*}
\forall \varphi \in F,\quad \varepsilon_M (\varphi) = \Vert \varphi - {\cal J}_{M-1}[\varphi] \Vert_{L^2(\Omega)},\ 
\end{equation*}
and  define:
\begin{equation*}
\tilde\varphi_{M}=  \arg\ \underset{\varphi \in F}{\max}\  \varepsilon_{M-1}(\varphi)
\end{equation*}
and: $\sigma_{M}=arg\ \underset{\sigma \in \Sigma}{sup} \vert \sigma(\tilde\varphi_{M}-{\cal J}_{M-1}[\tilde\varphi_{M}])  \vert$
The next basis function is then
\begin{equation*}
\tilde q_{M}=\dfrac{\tilde\varphi_{M}-{\cal J}_{M-1}[\tilde\varphi_{M}]}{\sigma_{M}(\tilde\varphi_{M}-{\cal J}_{M-1}[\tilde\varphi_{M}])}.
\end{equation*}
We finally define the matrix $\widetilde {B^M}$ such that $\tilde B_{ij}^M=\sigma_i (\tilde q_j)$, and set $\widetilde X_M \equiv \span\{ \tilde q_j,\ j\in [1,M] \} = \span \{ \tilde \varphi_j,\ j\in [1,M] \}$. It can be proven as in~\cite{nguyen04:_handb_mater_model,prud'homme02:_reliab_real_time_solut_param,veroy03:_poster_error_bound_reduc_basis}.

\begin{lemma}
\label{lemma3}
For any $M\le M_{\max}$, the set $\{ \tilde q_j,\ j\in [1,M] \}$ is linearly independent and $\tilde X_M$ is of dimension $M$.The matrix
 $B^M$ is lower triangular with unity diagonal (hence invertible) with other entries $\in [-1, 1]$. The generalized empirical interpolation procedure is well-posed in $L^2(\Omega)$.
\end{lemma}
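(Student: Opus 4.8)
The plan is to prove all three assertions simultaneously by induction on $M$, the engine being the two defining features of the construction: the interpolation conditions that pin down ${\cal J}_{M-1}$, and the greedy $\arg\max$ that selects each $\sigma_j$. Throughout, let me write $r_j := \tilde\varphi_j - {\cal J}_{j-1}[\tilde\varphi_j]$ (with $r_1 := \tilde\varphi_1$), so that $\tilde q_j = r_j/\sigma_j(r_j)$, and let $\tilde B^M$ denote the matrix with entries $\tilde B^M_{ij} = \sigma_i(\tilde q_j)$. The base case $M=1$ is immediate: $\tilde B^1_{11}=\sigma_1(\tilde q_1)=\sigma_1(\tilde\varphi_1)/\sigma_1(\tilde\varphi_1)=1$, the $1\times 1$ system is trivially solvable, and $\{\tilde q_1\}$ is independent because $\tilde\varphi_1\neq 0$ (guaranteed by $\dim\span(F)\ge \mathcal M\ge 1$). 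For the inductive step I assume the statement at level $M-1$; in particular $\tilde B^{M-1}$ is invertible, so ${\cal J}_{M-1}$ is well defined and $r_M$ makes sense.

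The central structural claim is that $\tilde B^M$ is lower triangular with unit diagonal and off-diagonal entries in $[-1,1]$. For the diagonal, $\tilde B^M_{jj}=\sigma_j(r_j)/\sigma_j(r_j)=1$ straight from the definition of $\tilde q_j$. For the strictly upper part $(i<j)$ I would invoke the interpolation conditions defining ${\cal J}_{j-1}$, namely $\sigma_i({\cal J}_{j-1}[\tilde\varphi_j])=\sigma_i(\tilde\varphi_j)$ for all $i\le j-1$; subtracting gives $\sigma_i(r_j)=0$, hence $\tilde B^M_{ij}=\sigma_i(\tilde q_j)=0$. For the remaining entries, including the strictly lower part $(i>j)$, I would use the greedy optimality of $\sigma_j = \arg\max_{\sigma\in\Sigma}|\sigma(r_j)|$: since every selected form $\sigma_i$ lies in $\Sigma$, we have $|\sigma_i(r_j)|\le|\sigma_j(r_j)|$, so $|\tilde B^M_{ij}|=|\sigma_i(r_j)|/|\sigma_j(r_j)|\le 1$. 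This simultaneously yields the triangular shape and the $[-1,1]$ bound.

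Once the triangular structure is established, invertibility is free, since $\det \tilde B^M = 1$; this is precisely the well-posedness of the level-$M$ interpolation problem, because the system $\sum_{j=1}^M \widetilde{\alpha_j^M}(\varphi)\,\sigma_i(\tilde q_j)=\sigma_i(\varphi)$ has $\tilde B^M$ as its matrix and hence a unique solution, so ${\cal J}_M$ is well defined. Linear independence of $\{\tilde q_1,\dots,\tilde q_M\}$ then follows by applying the $\sigma_i$ to a vanishing combination $\sum_j c_j\tilde q_j=0$: this gives $\tilde B^M c = 0$, whence $c=0$, so $\dim\tilde X_M = M$. This closes the induction and proves the three assertions.

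The step I expect to be the genuine obstacle is not the algebra above but guaranteeing that the construction never degenerates, i.e. that $\sigma_j(r_j)\neq 0$ at every stage, so the normalizations defining $\tilde q_j$ are legitimate and the greedy $\arg\max/\arg\sup$ are attained. Attainment relies on the compactness of $F$ in ${\cal X}$ (which is assumed) for the selection of $\tilde\varphi_M$, and on a corresponding closedness/compactness of $\Sigma$ for $\sigma_M$. The non-vanishing of $\sigma_M(r_M)$ rests on two facts: first, $r_M\neq 0$, which holds because $\dim\span(F)\ge\mathcal M\ge M$ forces $\tilde\varphi_M\notin\tilde X_{M-1}$ (otherwise $\varepsilon_{M-1}(\tilde\varphi_M)=0$ and the greedy choice would collapse); and second, the existence of some $\sigma\in\Sigma$ with $\sigma(r_M)\neq 0$. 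I would therefore isolate, as the standing hypothesis making the procedure well-posed, that $\Sigma$ is total over $\span(F)$ in the sense that no nonzero element of $\span(F)$ is annihilated by every $\sigma\in\Sigma$. Under this assumption $|\sigma_M(r_M)|=\max_{\sigma\in\Sigma}|\sigma(r_M)|>0$, and the entire induction goes through.
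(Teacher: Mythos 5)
Your proof is correct and follows essentially the same route as the standard argument the paper appeals to (it omits the proof, citing the EIM references): triangularity of $\tilde B^M$ from the interpolation conditions $\sigma_i(r_j)=0$ for $i<j$, unit diagonal from the normalization, the $[-1,1]$ bound on the strictly lower entries from the greedy maximality of $\sigma_j$ over $\Sigma$, and then invertibility yielding both linear independence and well-posedness. Your closing observation is also on target: the paper leaves implicit the hypothesis you isolate, namely that $\Sigma$ is total (unisolvent) on $\span (F)$ so that $\sigma_M(r_M)\neq 0$, a condition that is indeed stated explicitly in later treatments of the GEIM.
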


%\begin{proof}
%to be done
%\end{proof}
% 
In order to quantify the error  of the interpolation procedure, like in the standard interpolation procedure, we introduce the
Lebesgue constant in the $L^2$ norm: $\Lambda_M = \underset{\varphi\in F}{\sup} \dfrac{\Vert {\cal J}_M[\varphi] \Vert_{L^2(\Omega)}}{\Vert \varphi \Vert_{L^2(\Omega)}}$ i.e. the $L^2$--norm of ${\cal J}_M$. A similar result as in the previous subsection holds

\begin{lemma}
$\forall \varphi \in F$, the interpolation error satisfies:
\vspace{-0.2cm}
\begin{equation*}
\Vert \varphi-{\cal J}_M[\varphi] \Vert _{L^2(\Omega)} \leq (1+\Lambda_M) \underset{\psi_M \in \tilde X_M}{\inf}\Vert \varphi-\psi_M \Vert _{L^2(\Omega)} 
\end{equation*}
A (very pessimistic) upper-bound for $\Lambda_M$ is:
\begin{equation*}
\Lambda_M \leq  2^{M-1} \underset{i\in [1,M]}{\max} \Vert q_i \Vert_{L^2(\Omega)} 
\end{equation*} 
\end{lemma}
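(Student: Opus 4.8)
The plan is to treat $\mathcal{J}_M$ as a linear projector onto $\tilde X_M$ and then run the classical Lebesgue argument. First I would record that $\mathcal{J}_M$ is linear: by Lemma~\ref{lemma3} the matrix $\tilde B^M$ is invertible, so the coefficients $\widetilde{\alpha_j^M}(\varphi)$ are obtained by applying the fixed linear map $(\tilde B^M)^{-1}$ to the vector $(\sigma_i(\varphi))_{i=1}^M$, which depends linearly on $\varphi$. Next I would establish the key projection property $\mathcal{J}_M[\psi_M]=\psi_M$ for every $\psi_M\in\tilde X_M$: writing $\psi_M=\sum_j c_j\tilde q_j$, its interpolant is the unique element of $\tilde X_M$ whose forms $\sigma_i$ match those of $\psi_M$, and $\psi_M$ itself trivially satisfies these constraints, so uniqueness (again from invertibility of $\tilde B^M$) forces $\mathcal{J}_M[\psi_M]=\psi_M$.

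With this in hand the first inequality is routine. For any $\psi_M\in\tilde X_M$, linearity together with the projection property give $\varphi-\mathcal{J}_M[\varphi]=(\varphi-\psi_M)-\mathcal{J}_M[\varphi-\psi_M]$, so by the triangle inequality and the definition of $\Lambda_M$ as the $L^2$ operator norm of $\mathcal{J}_M$ one gets $\|\varphi-\mathcal{J}_M[\varphi]\|_{L^2(\Omega)}\le(1+\Lambda_M)\|\varphi-\psi_M\|_{L^2(\Omega)}$; taking the infimum over $\psi_M\in\tilde X_M$ closes the estimate. One subtlety worth flagging is that $\varphi-\psi_M$ need not lie in $F$, so I must read $\Lambda_M$ as the genuine operator norm of $\mathcal{J}_M$ (as the text's description ``the $L^2$-norm of $\mathcal{J}_M$'' suggests), not merely the supremum of the Rayleigh-type quotient over $F$.

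For the bound on $\Lambda_M$ the idea is to estimate the interpolation coefficients through the triangular structure of $\tilde B^M$. Writing $\mathcal{J}_M[\varphi]=\sum_{j=1}^M\alpha_j\tilde q_j$ with $\tilde B^M\vec\alpha=(\sigma_i(\varphi))_i$, the normalization $\|\sigma_i\|=1$ yields $|\sigma_i(\varphi)|\le\|\varphi\|_{L^2(\Omega)}$ for each $i$. Since by Lemma~\ref{lemma3} $\tilde B^M$ is lower triangular with unit diagonal and off-diagonal entries in $[-1,1]$, forward substitution gives $\alpha_1=\sigma_1(\varphi)$ and $\alpha_i=\sigma_i(\varphi)-\sum_{j<i}\tilde B^M_{ij}\alpha_j$, from which an immediate induction yields $|\alpha_i|\le 2^{i-1}\|\varphi\|_{L^2(\Omega)}$. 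Then $\|\mathcal{J}_M[\varphi]\|_{L^2(\Omega)}\le\sum_i|\alpha_i|\,\|\tilde q_i\|_{L^2(\Omega)}$, and bounding each $\|\tilde q_i\|$ by $\max_k\|\tilde q_k\|$ and summing the geometric series produces the exponential factor, giving the announced estimate up to the exact value of the constant (the geometric sum $\sum_i 2^{i-1}$ being the source of the $2^{M-1}$ order).

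The main obstacle is not any single estimate but making the projector picture rigorous: everything hinges on Lemma~\ref{lemma3}, which guarantees that $\tilde B^M$ is invertible so that the generalized interpolant exists, is unique, and fixes $\tilde X_M$. Once $\mathcal{J}_M$ is known to be a well-defined linear projector, both statements are soft. The only genuinely delicate point is bookkeeping the constant in the coefficient induction: the crude bound $|\tilde B^M_{ij}|\le 1$ forces a worst-case doubling at each step, and it is exactly this pessimism---highlighted by the authors---that propagates into the $2^{M-1}$ factor. A sharper constant would require quantitative control on the off-diagonal entries of $\tilde B^M$, which the greedy construction does not readily supply.
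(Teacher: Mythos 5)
Your proof is correct and follows essentially the same route as the paper: the projection property $\mathcal{J}_M[\psi_M]=\psi_M$ combined with the triangle inequality for the first bound, and forward substitution through the unit lower-triangular matrix $\tilde B^M$ with the inductive estimate $|\widetilde{\alpha_j^M}(\varphi)|\le 2^{j-1}\Vert\varphi\Vert_{L^2(\Omega)}$ for the second. You even treat two points more carefully than the paper does: reading $\Lambda_M$ as the genuine operator norm (needed because $\varphi-\psi_M$ need not lie in $F$), and honestly flagging that the geometric sum gives $2^M-1$ rather than exactly the stated $2^{M-1}$, a factor the paper's own proof glosses over.
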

 
 \begin{proof} The first part is standard and relies on the fact that, for any $\psi\in\tilde X_N$ then ${\cal J}_M(\psi_M) = \psi_M$. It follows that 
\begin{equation*}\forall \psi_M \in \tilde X_M , \quad
\Vert \varphi-{\cal J}_M[\varphi] \Vert _{L^2(\Omega)}  = \Vert [\varphi - \psi_M] -{\cal J}_M[\varphi - \psi_M] \Vert _{L^2(\Omega)} \le (1+\Lambda_M) \Vert \varphi - \psi_M \Vert _{L^2(\Omega)}
\end{equation*} 
Let us now consider a given $\varphi \in F$ and its interpolant $\mathcal{J}_M[\varphi]=\sum\limits_{i=1}^M \widetilde{\alpha_i^M}(\varphi) \tilde q_i$ in dimension $M$. The constants $\widetilde{\alpha_i^M}(\varphi)$ come from the generalized interpolation problem: $\forall j \in [1,M],\ \sigma_j(\varphi)=\sum\limits^{j-1}_{i=1} \widetilde{\alpha_i^M}(\varphi)\sigma_j(\tilde q_i) + \widetilde{\alpha_j^M}(\varphi) j(\psi)$. We infer the recurrence relation for the constants: 
$$\forall j \in [1,M],\ \widetilde{\alpha_j^M}(\varphi) = \sigma_j(\psi) - \sum\limits^{j-1}_{i=1} \alpha_i(\psi)\sigma_j(q_i). $$ 
Based on the properties of the entries in matrix $\tilde B^M$ stated in lemma \ref{lemma3}, we can obtain, by recurrence,  an upper bound for each $ \widetilde{\alpha_j^M}(\varphi)$: $\forall j \in [1,M],\ | \widetilde{\alpha_j^M}(\varphi)| \leq \left(  2^{j-1} \right) \|\varphi\|_{L^2(\Omega)}$. Then, $\forall \varphi \in F, \ \forall M \le M_{\max}$: $\|\ \mathcal{J}_M(\varphi) \|_{L^2(\Omega)} \leq \left[ \sum\limits^{M}_{i=1} \left( 2^{j-1} \right)  \|q_i\|_{L^2(\Omega)} \right] \|\varphi\|_{L^2(\Omega)} $. Therefore: $\Lambda_M  \leq 2^{M-1} \underset{i\in [1,M]}{\max} \Vert q_i \Vert_{L^2(\Omega)}  $. Note that the norms of the rectified basis function $q_i$ verify $\Vert q_i \Vert_{L^2(\Omega)}\ge 1$ from the hypothesis done on the norm of the $\sigma_i$.

\end{proof}

%Note that this results 
% We can further prove that
%\begin{theorem}
%\label{thm1} 

%\begin{itemize}
%\item Assume that   there exists a sequence of finite dimensional spaces:
%$X_1\subset X_2\subset\dots\subset X_M\subset\dots \subset {\cal X},\ \text{with dim} X_M = M$.
%\item Assume those spaces are of small Kolmogorov n-width in the sense that: $\exists C>0,\gamma >0,\ d_M(X_M,{\cal X}) \leq C e^{-\gamma M}$
%\end{itemize}

%
%\hspace*{1.0cm} $\Rightarrow$ There exists $\beta>0$ such that:
%\begin{equation}
%\|\Phi-{\cal I}_M \Phi\|_{L^2(\Omega)}\leq c  e^{-\beta M}.
%\end{equation}
%\end{theorem}

%

%лллллллллллллллллллллллллллллллллллллллл

%Assume you are given a set $\varphi_1,\dots,\varphi_n,\dots$ of \textbf{linearly independent functions coming from a space of small Kolmogorov n-width}... AND a \textbf{dictionary of continuous linear forms $\{\sigma\}$}.
% 
%Given a function $f$ that you want to approximate, the problem is:

% \begin{itemize}
% \item find a familly of scalars $\{\alpha_n {\alert{\!\!\!^M}}\}_{1\le n\le M}$ such that
%     \item $\sigma_i(f)=\sum\limits_{n=1}^M \alpha_n \uncover<2->{\alert{\!\!\!^M}} \sigma_i(\varphi_n)$
% \end{itemize}
%where the \emph{linear forms} $\sigma_m \uncover<2->{\alert{\!\!\!^M}}$ are suitably chosen.
%$$X_M=\hbox{Span}\{\varphi_n,\ 1\le n\le M\}$$   
%The continuity assumption can be relaxed: $L^2(\Omega)$ instead of ${\cal C}^0$.
%   
%\uncover<3>{\alert{\hspace*{0.6cm} In our case: $\varphi_n = \sum\limits_j \gamma_j u(.,\mu_j)$ and $f=u(.,\mu)$.}}
%\end{block}
%\end{small}
%\end{frame}

%

%ллллллллллллллллллллллллллллллллллллллллллл

\subsection{Numerical results}

The results that we present here to illustrate the GEIM are based on  data acquired  in silico using the finite element code Freefem \cite{freefem} on the domain represented on figure 1.

\begin{figure}[htbp]\center
  \includegraphics[width=5cm]{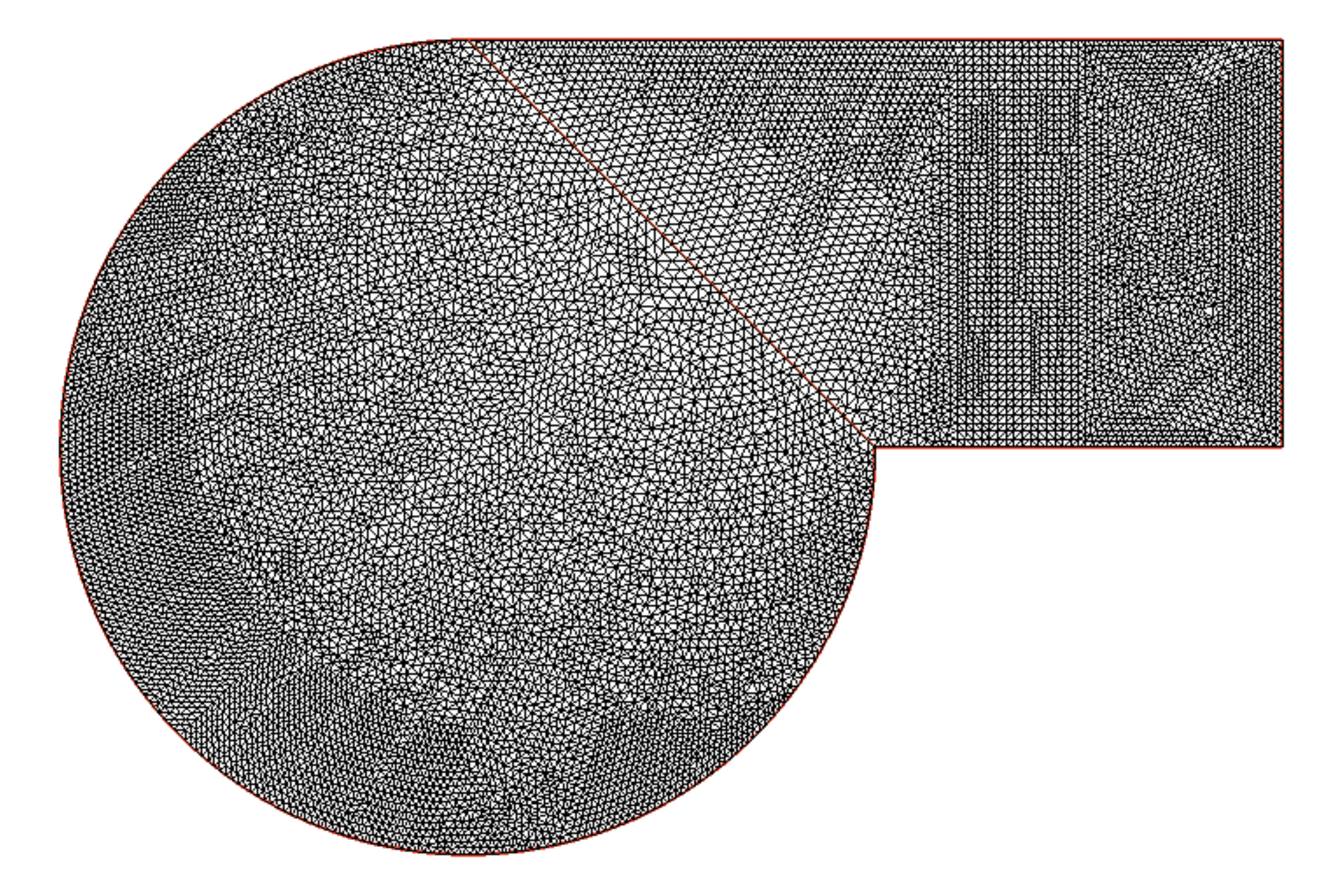}
  \caption{The domain $\Omega$ and its mesh.}
 \end{figure}

We consider over the domain $\Omega\in \R^2$ the Laplace problem: 
\begin{eqnarray}
\label{edp}
&-\Delta \varphi =f,\ \text{in}\ \Omega \\
&f=1+(\alpha \sin(x)+\beta \cos(\gamma \pi y)) \chi_1(x, y)\nonumber
\end{eqnarray}
complemented with homogeneous Dirichlet boundary conditions.
Here $\alpha$, $\beta$ and $\gamma$ are 3 parameters freely chosen  in given intervals in $\R$ that modulate the forcing term on the right hand side. We assume that the forcing term only acts on a part of $\Omega$ named $\Omega_1$ ($\Omega_1 =$ support$(\chi_1)$) and we denote as $\Omega_2$ the remaining part $\Omega_2 = \Omega\setminus\overline\Omega_1$. 

The easy observation is that the solution $\varphi$, depends on the parameters $\alpha, \beta,\gamma$: we plot here one of the possible solutions

\begin{figure}[htbp]\center
   \includegraphics[width=5cm]{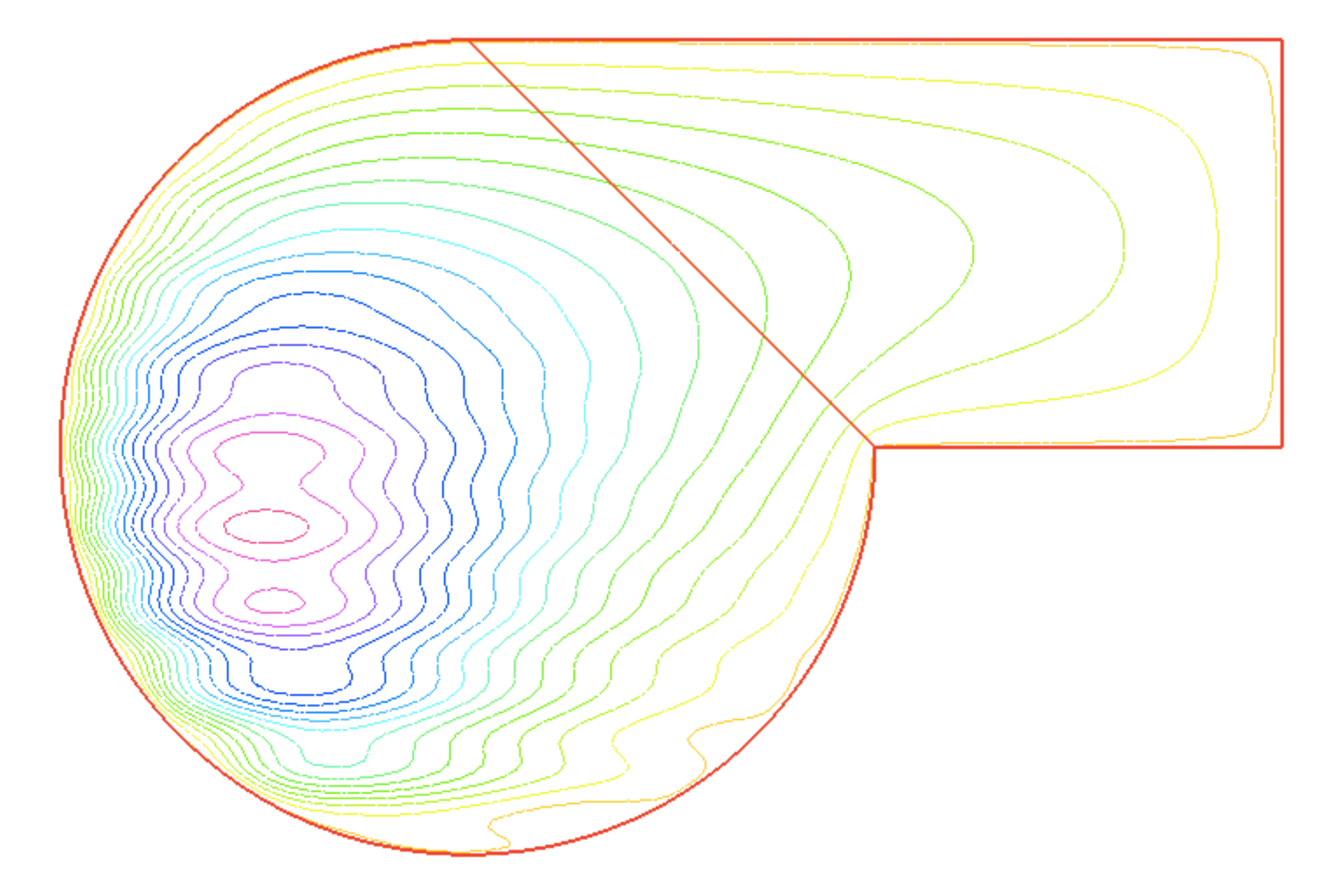}
  \caption{One of the solutions, we note that the effect of the forcing is mainly visible on domain $\Omega_1$ on the left hand side.}
 \end{figure}

We also note that the restriction $\varphi_{\vert \Omega_2}$ to $\Omega_2$ is indirectly dependent on these coefficients and thus is a candidate for building a set (when the parameters vary) of small Kolmogorov width. This can be guessed if we look at the numerical simulations obtained for three representative choices for $\alpha, \beta,\gamma$ 

\begin{figure}[htbp]\center
   \includegraphics[width=4cm]{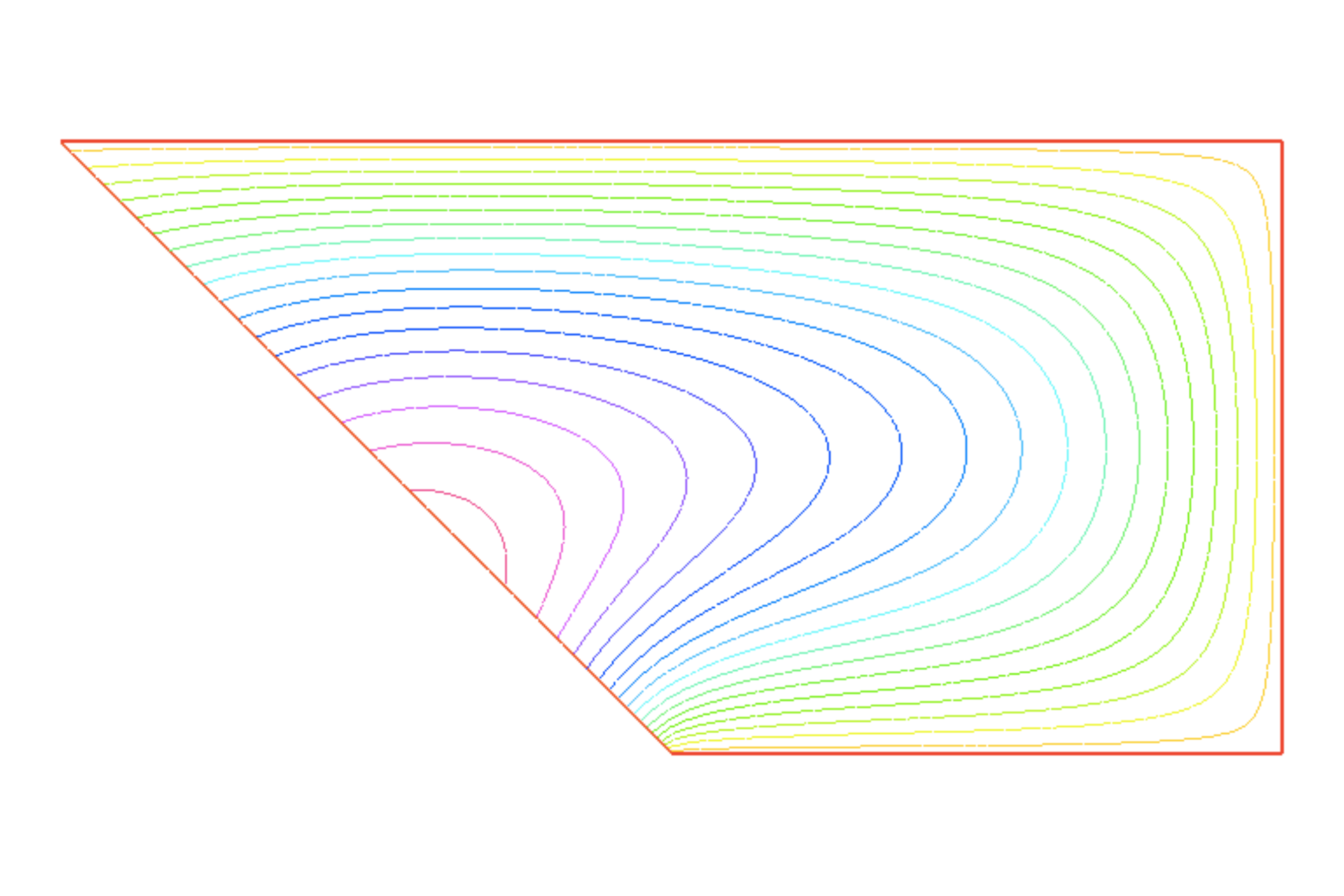}  \includegraphics[width=4cm]{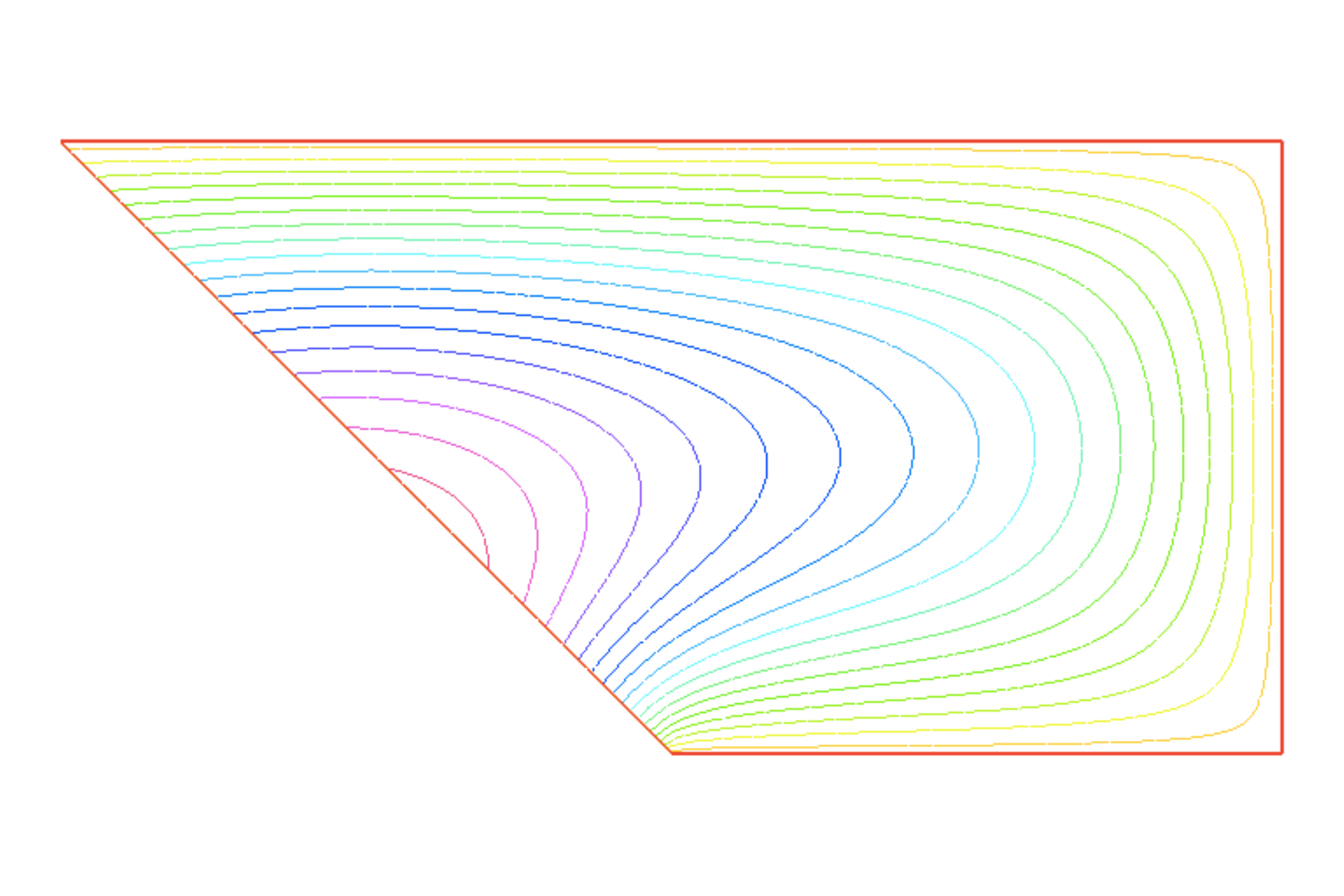}  \includegraphics[width=4cm]{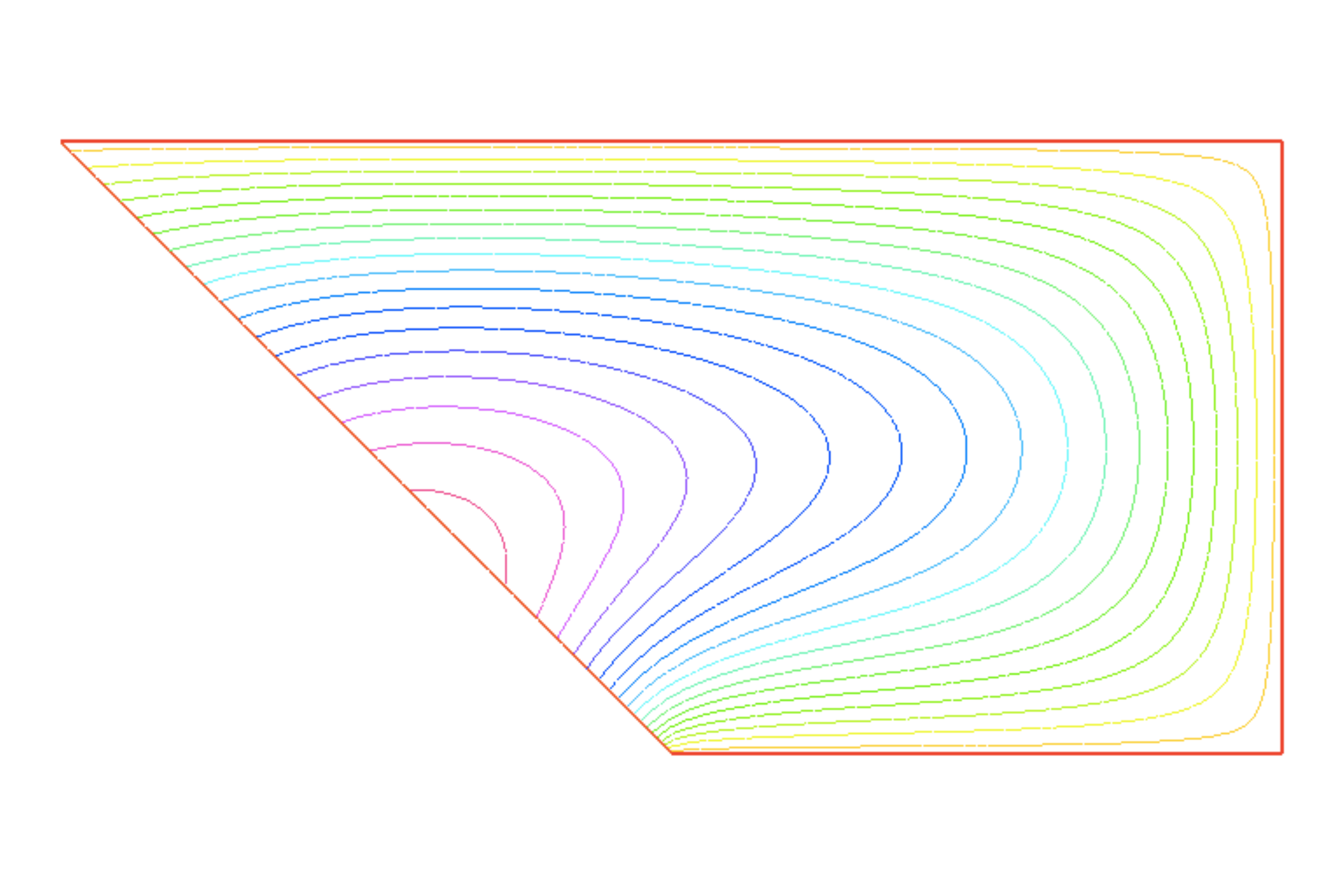}
  \caption{Three generic solutions restricted on the sub-domain  $\Omega_2$.}
 \end{figure}

For the GEIM, we use moments computed from the restriction of the solution $\varphi(\alpha, \beta,\gamma)$ over $\Omega_2$ multiplied by localized functions with small compact support over $\Omega_2$. The reconstructed solutions with the GEIM based on only 5 interpolating functions is $10^{14}$ time better than the reconstructed function with 1 interpolating function illustrating the high order of the reconstruction's convergence.

In the next example, we choose a similar problem but the shape of domain $\Omega_2$ is a further parameter

\begin{figure}[htbp]\center
    \includegraphics[scale=0.35]{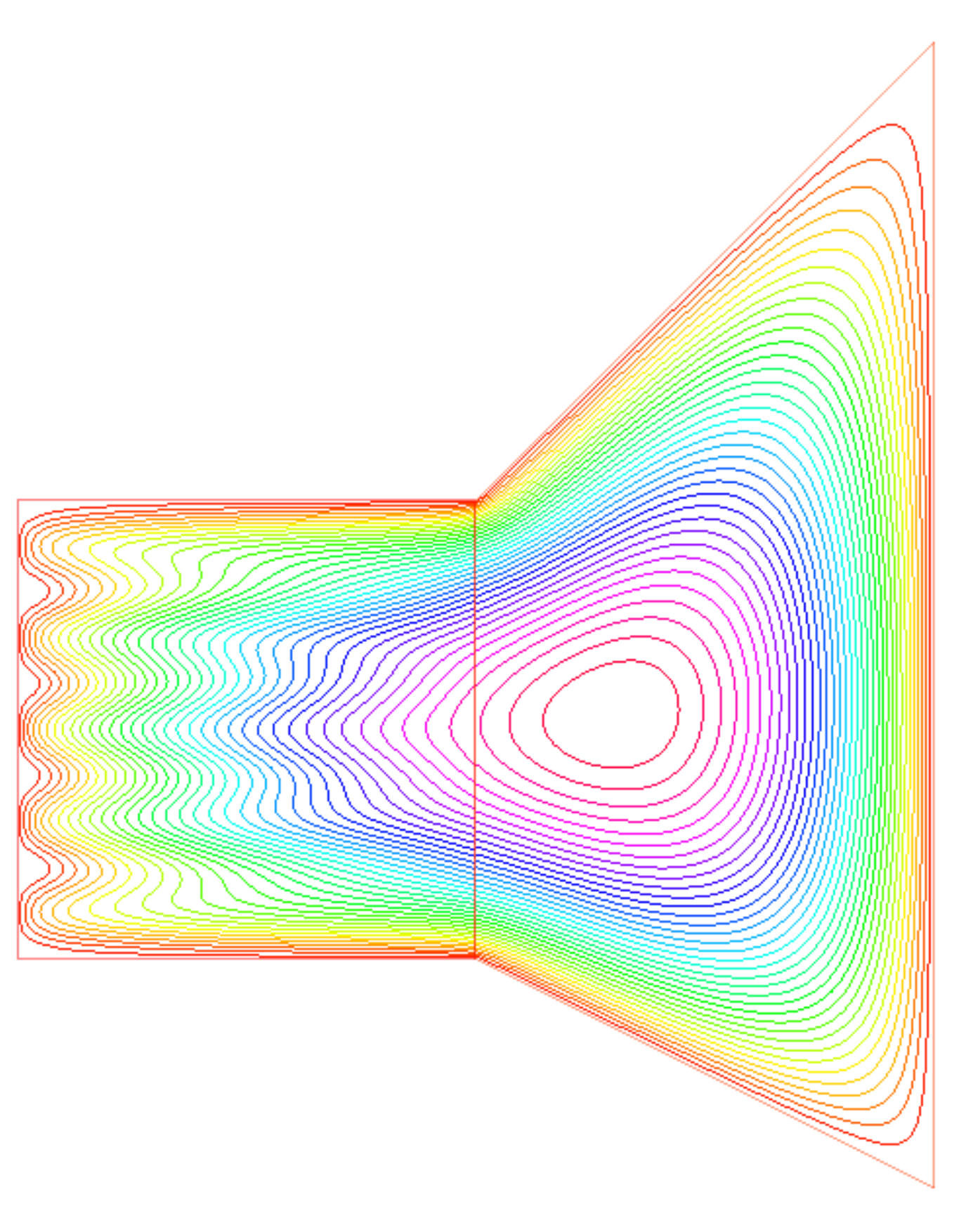}  \includegraphics[scale=0.3]{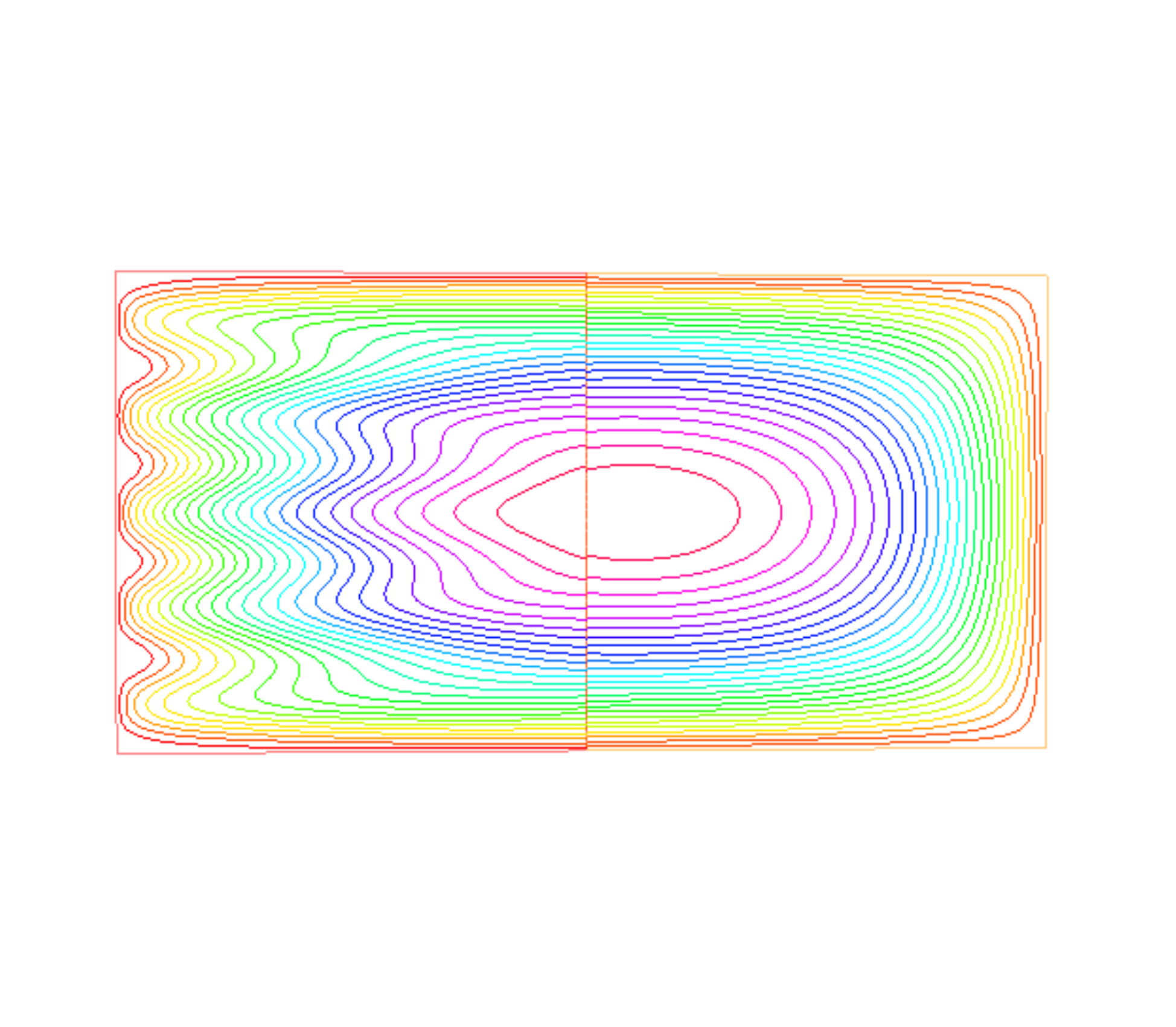}%}
  \caption{Two generic solutions when shape of the sub-domain  $\Omega_2$ varies.}
 \end{figure}
 
 In order to get an idea of the Kolmogorov width of the set $\{\varphi_{\vert \Omega_2}(\alpha, \beta,\gamma, \Omega_2)$, we perform two Singular Value Decompositions (one in $L^2$, the other in $H^1$) over 256 values (approximated again with Freefem) and plot the decay rate of the eigenvalues ranked in decreasing order: the results are shown on figure 5
 
 \begin{figure}[htbp]\center
    \includegraphics[scale=0.3]{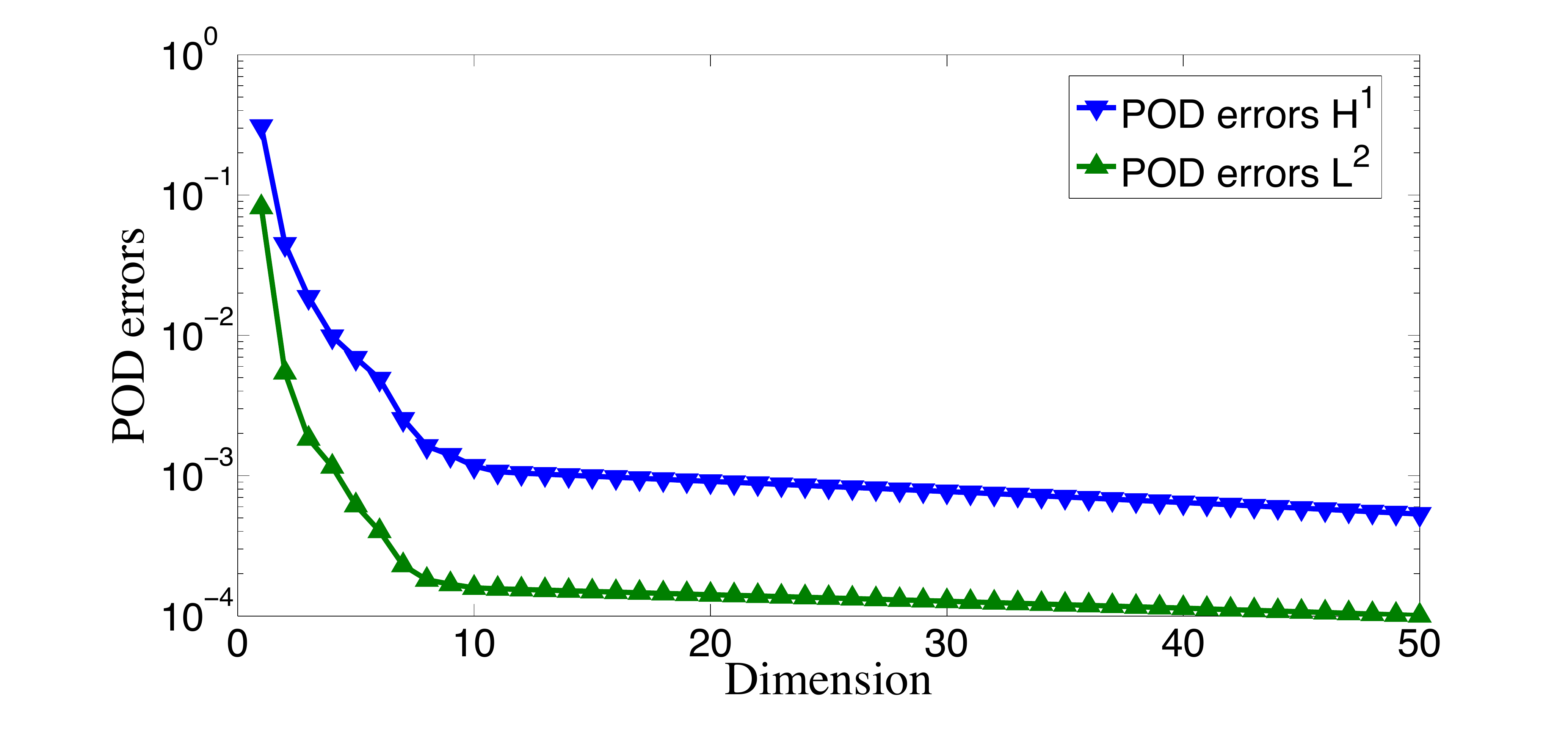}%}
  \caption{Two SVD ( in $L^2$ and in $H^1$) of the set of solutions over $\Omega_2$.}
 \end{figure}
 We note that after about 9 eigenvalues, the finite element error dominates the decay rate of the true eigenvalues. The GEIM is built  up again with captors represented as local weighted averages over $\Omega_2$. The interpolation error is presented on the next figure (figure 6)

 \begin{figure}[htbp]\center
    \includegraphics[scale=0.35]{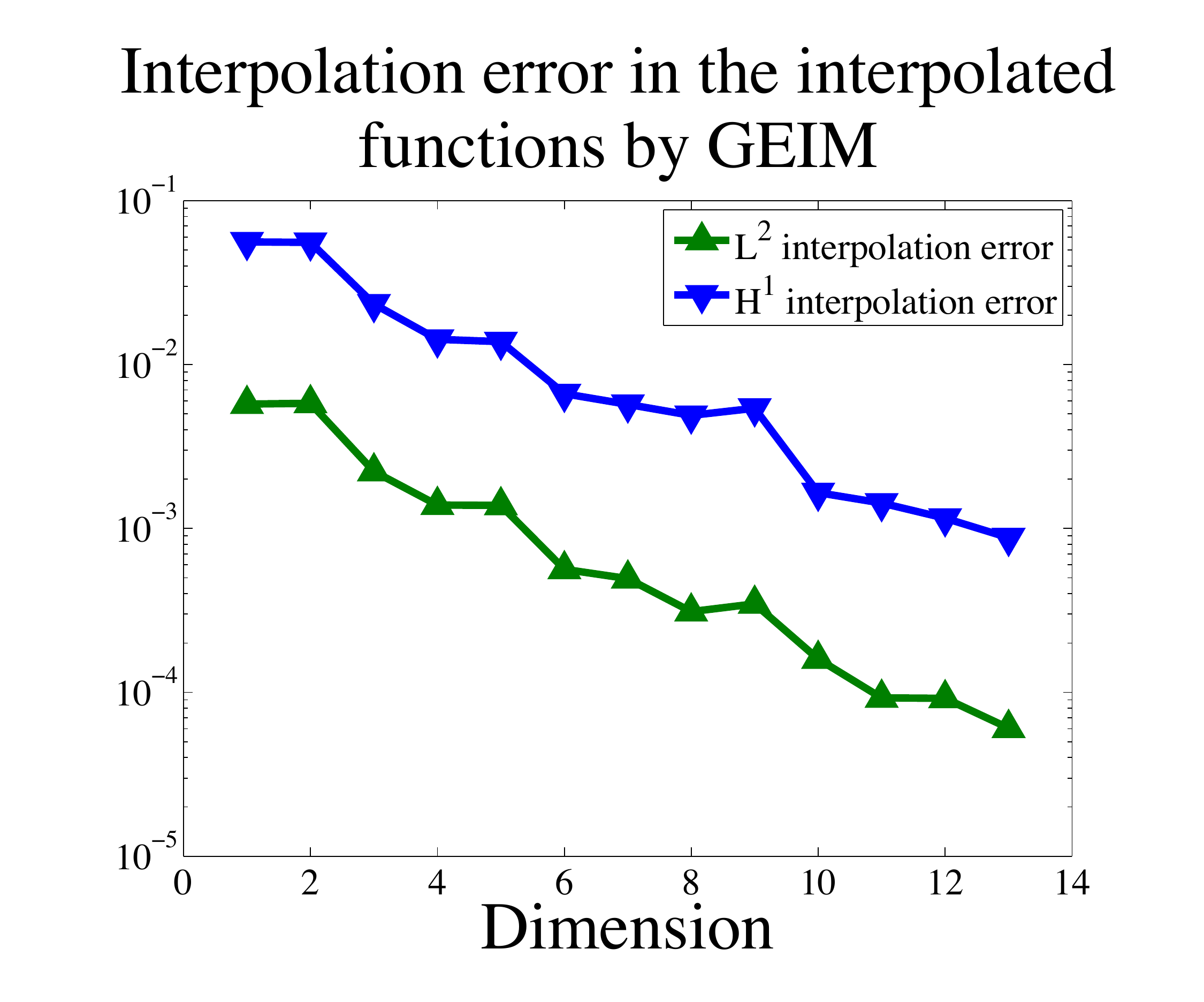}
  \caption{The worse GEIM error with respect to $M$ .}
 \end{figure}
 
 and we note that the decay rate, measured both in $L^2$ and $H^1$ is again quite fast. In order to compare with the best fit represented by the projection, in $L^2$ or in $H^1$, we use the SVD eigenvectors associated with the first $M$ eigenvalues and compare it with ${\cal J}_M$, for various values of $M$. This is represented on figure 7.

 \begin{figure}[htbp]\center
    \includegraphics[scale=0.35]{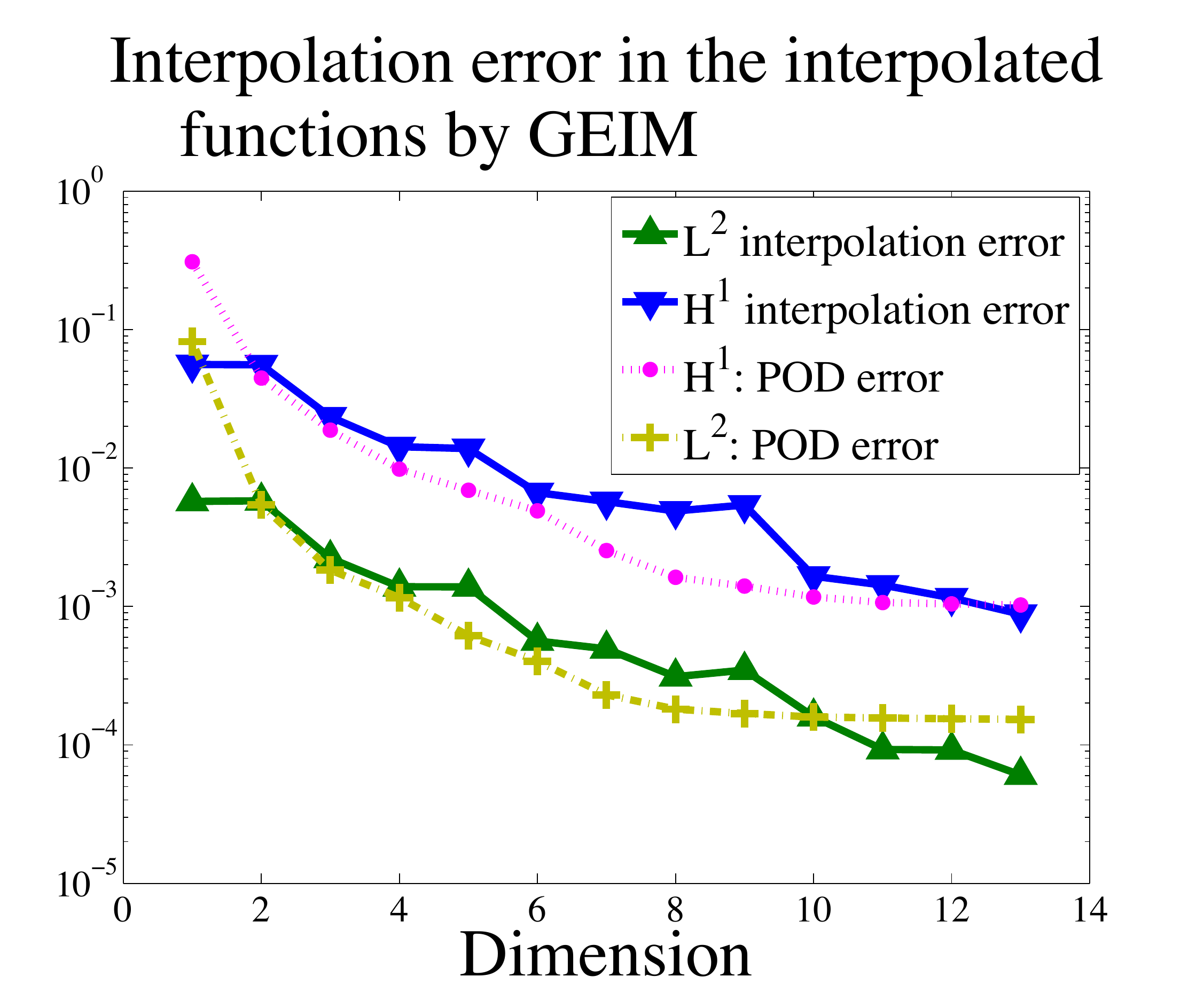}
  \caption{Evolution of the GEIM error versus the best fit error, both in $L^2$ and in $H^1$-norms.}
 \end{figure}
 
 The very good comparison allow to expect that the Lebesgue constant is much better than what is announced in lemma 4. A computational estimation of $\Lambda_M$ has been carried out: \hspace*{1.5cm}$\widetilde{\Lambda_M} = \underset{i\in [1,256]}{\max} \dfrac{\Vert {\cal I}_M[u_i] \Vert_{L^2(\Omega)}}{\Vert u_i \Vert_{L^2(\Omega)}}$

  \begin{figure}[htbp]\center
   \includegraphics[scale=0.3]{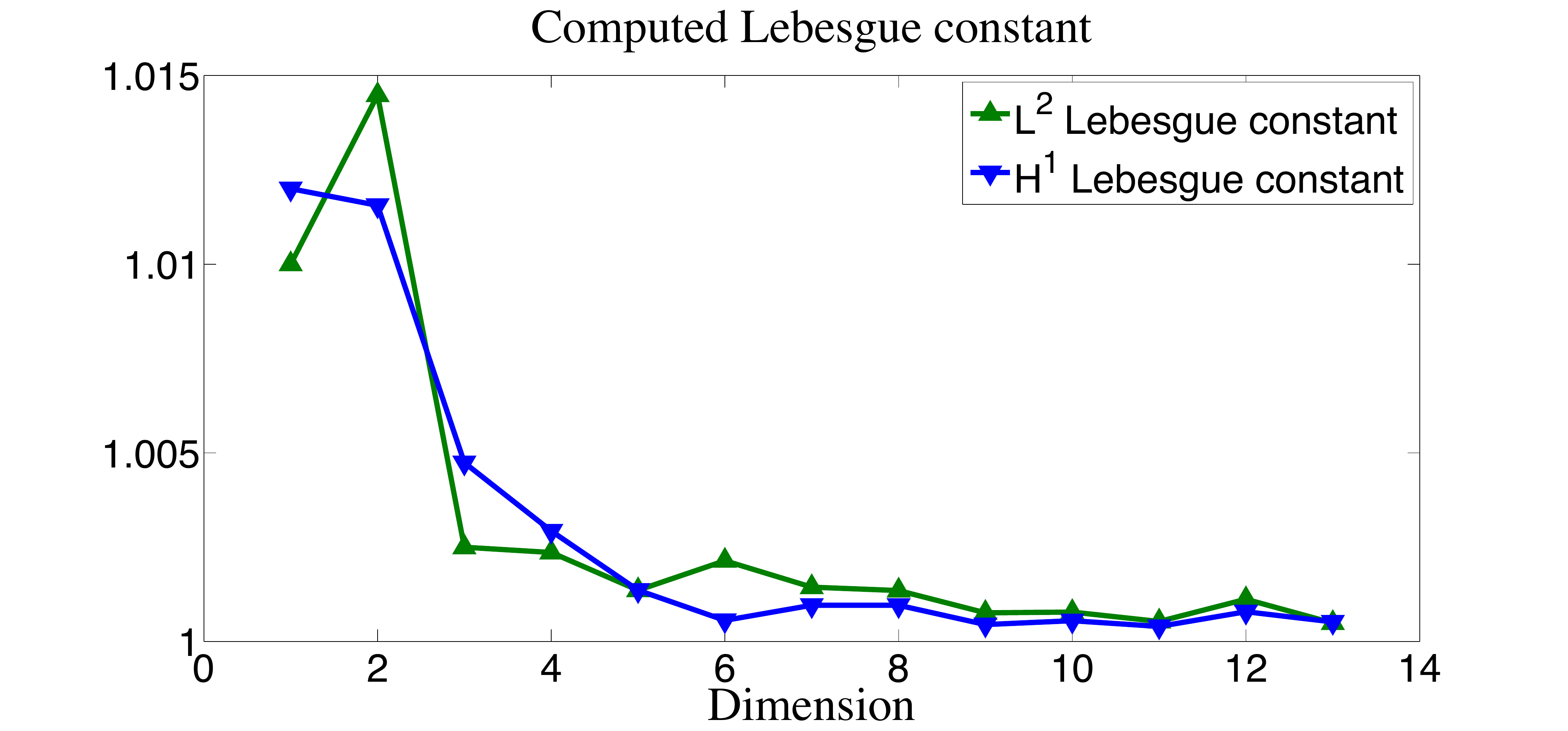}%}
  \caption{Evolution of the Lebesgue constant, i.e. the norm of the GEIM operator, both in $L^2$ and in $H^1$.}
 \end{figure}

\section{Coupling of deterministic and assimilation methods}

\subsection{The framework}

Imagine that we want to supervise a process in \textbf{real-time} for which we have a parameter dependent PDE. Assume that the computation of the solution over the full domain $\Omega$ is too expensive but we are in a situation where the domain $\Omega$ can be decomposed, as before, into two non overlapping subdomains $\Omega_1$ and $\Omega_2$ and that 
\begin{itemize}
\item $\Omega_1$ is small subdomain but the set of the restriction of the parameter dependent solutions has a 
large Kolmogorov width.
\item $\Omega_2$ is a big subdomain but the set of the restriction of the parameter dependent solutions has a  small Kolmogorov n-width 
\end{itemize}

In addition assume that it is possible to get outputs from sensors based in $\Omega_2$. The GEIM allows to reconstruct accurately the current solution associated to some parameters over $\Omega_2$ and thus is able to build the boundary condition necessary over the interface between $\Omega_1$ and $\Omega_2$ that with the initially given boundary condition over $\partial \Omega$ to be the necessary boundary condition over $\partial\Omega_1$ that complement the original PDE set now over $\Omega_1$ and not $\Omega$ as is illustrated in the next figures.

  \begin{figure}[htbp]\center
    \includegraphics[width=7cm]{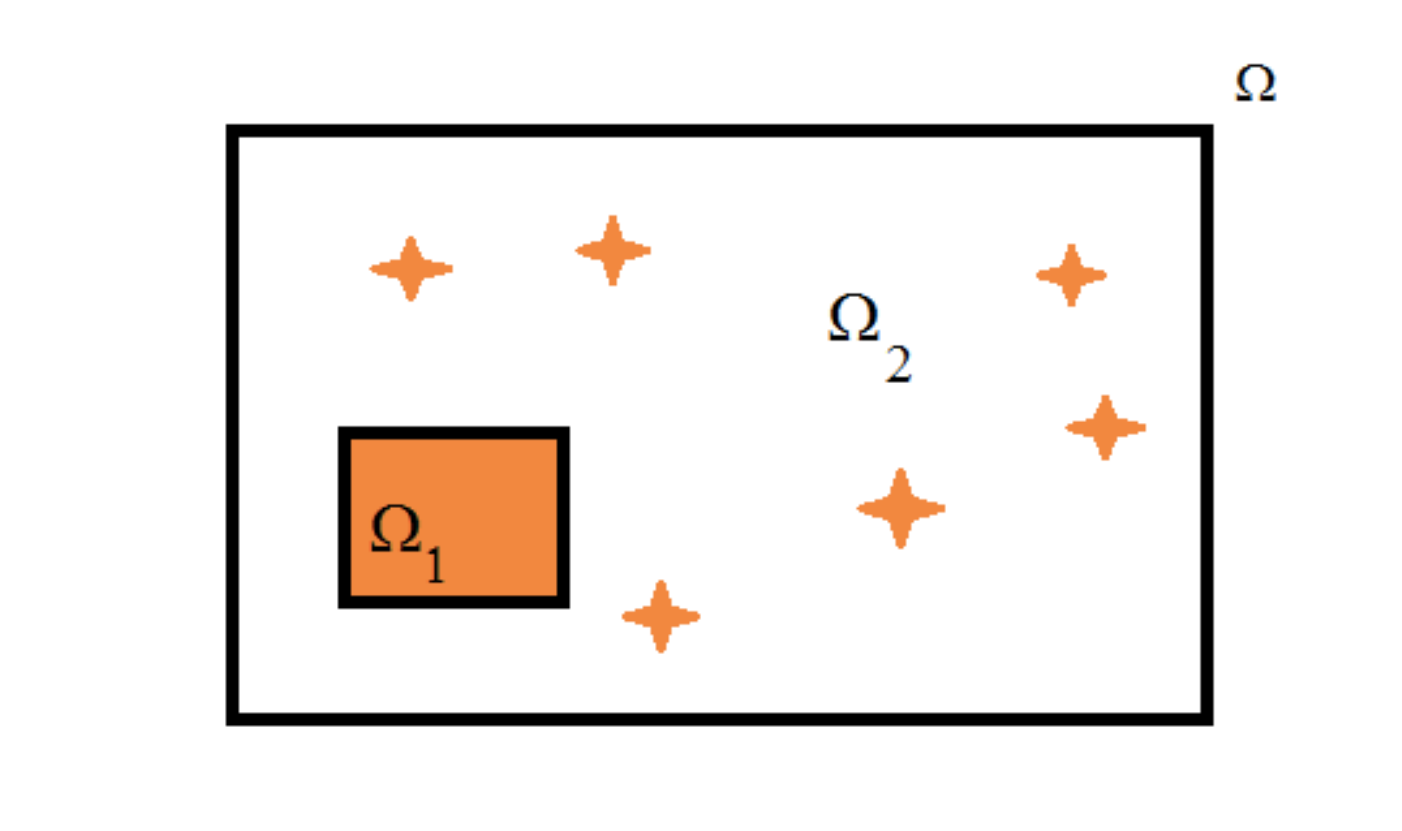}     \includegraphics[width=7cm]{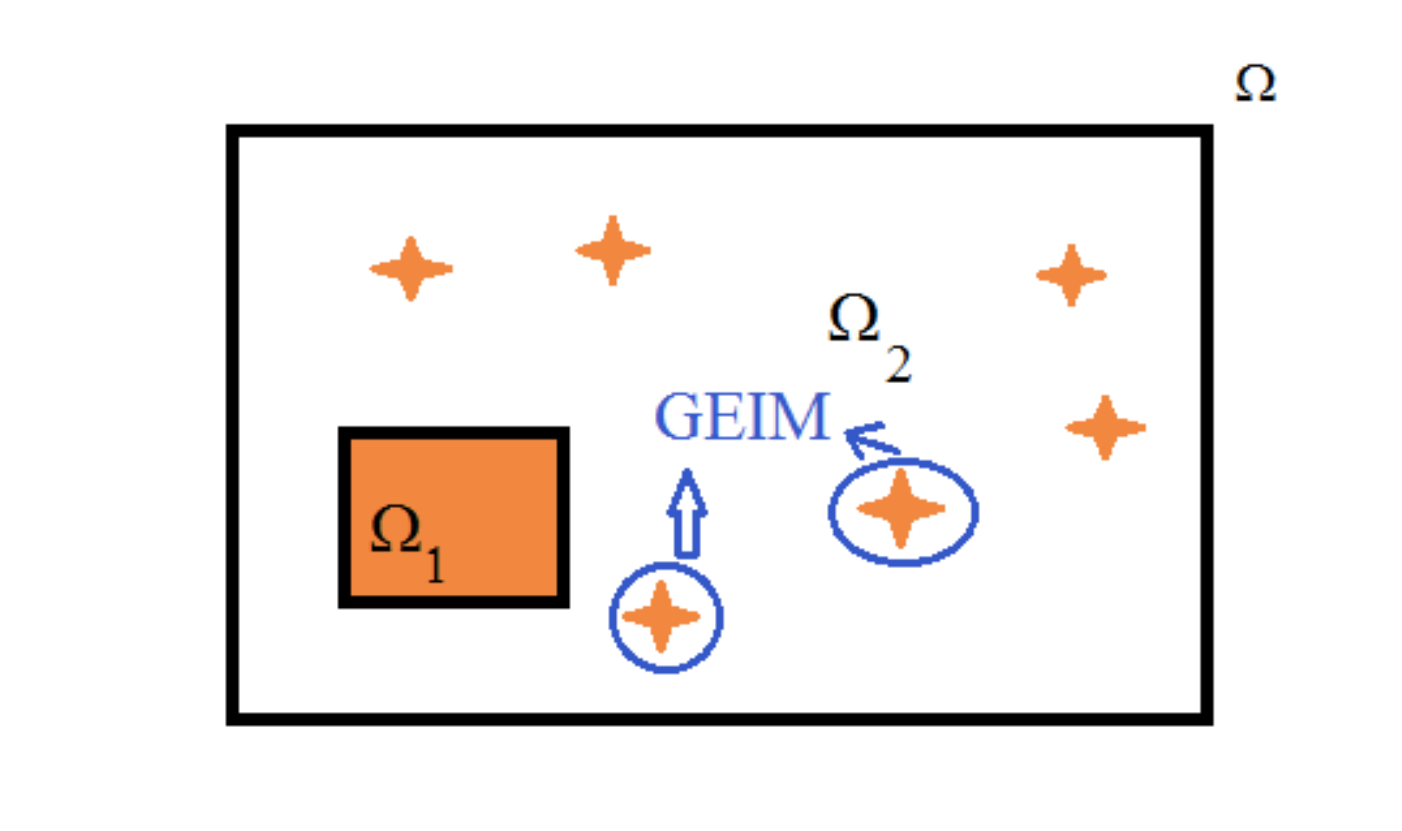}%}
  \caption{Schematic representation of the reconstruction over $\Omega_2$.}
 \end{figure}

 \begin{figure}[htbp]\center
    \includegraphics[width=7cm]{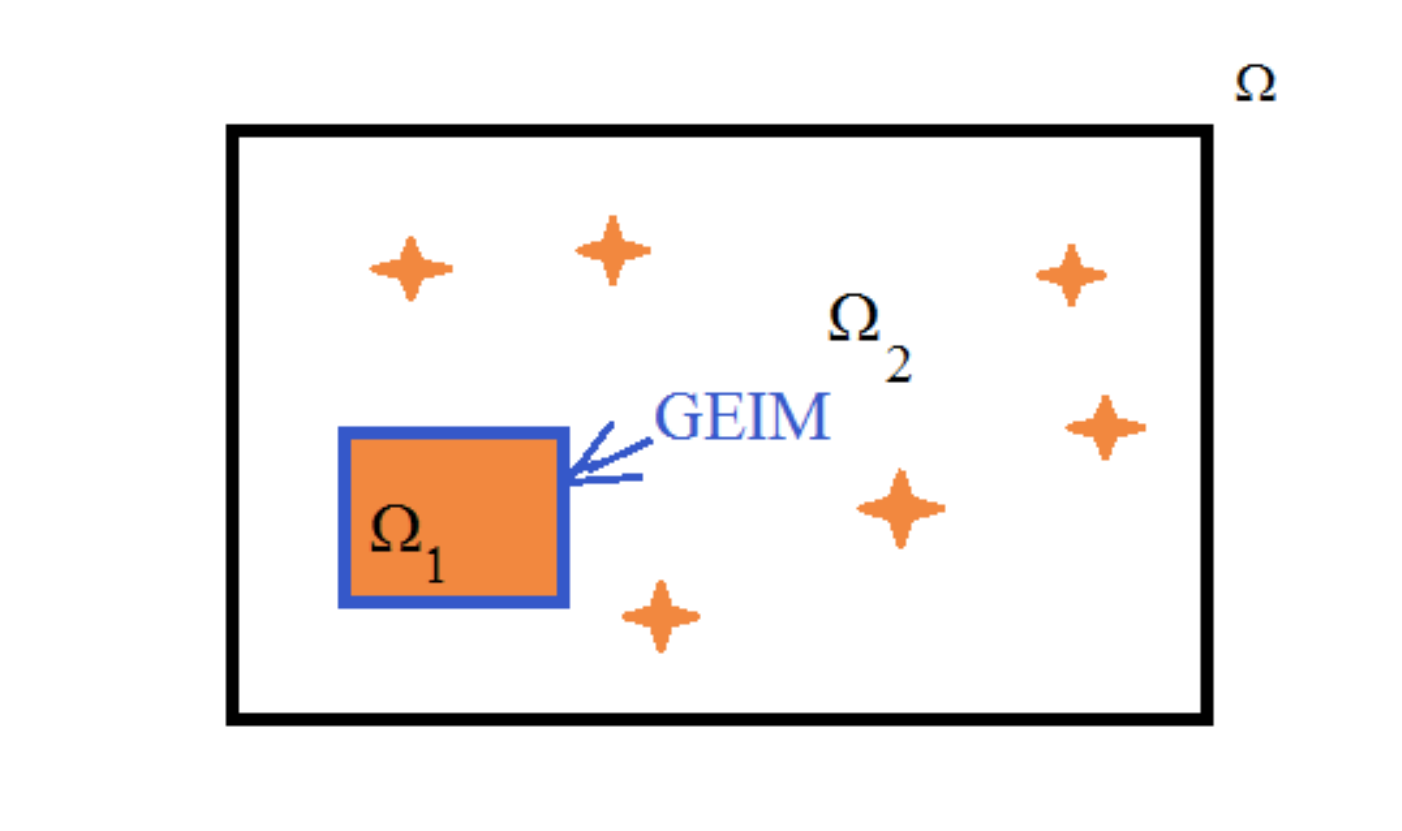}%}
  \caption{Schematic representation of the recovery over $\Omega_1$ thanks to the knowledge of the interface condition.}
 \end{figure}

\subsection{The combined approach -- numerical results}

We take over the numerical frame of the previous section and go further. We want to apply the GEIM to have a knowledge of the solution $\varphi_{\vert\Omega_2}$ and want to use the trace of the reconstruction on the interface to provide the boundary condition, over $\partial \Omega_1$ to the problem
\begin{eqnarray*}
&-\Delta \varphi =f,\ in\ \Omega_1 \\
&f=1+(\alpha \sin(x)+\beta \cos(\gamma \pi y)) \chi_1(x, y)
\end{eqnarray*}
derived from (\ref{edp}).
 
 The results are presented in figure 11 where both the $H^1$ error on $\varphi_{\vert\Omega_1}$ and $\varphi_{\vert\Omega_2}$ are presented as a function of $M$ being the number of interpolation data that are used to reconstruct $\varphi_{\vert\Omega_2}$. This illustrates that the use of the small Kolmogorov width of the set $\{\varphi_{\vert\Omega_2}\}$ as the parameters vary (including the shape of $\Omega_2$) can help in determining the value of the full $\varphi$ all over $\Omega$.
 
\begin{figure}[htbp]\center
\label{fig11}
    \includegraphics[scale=0.3]{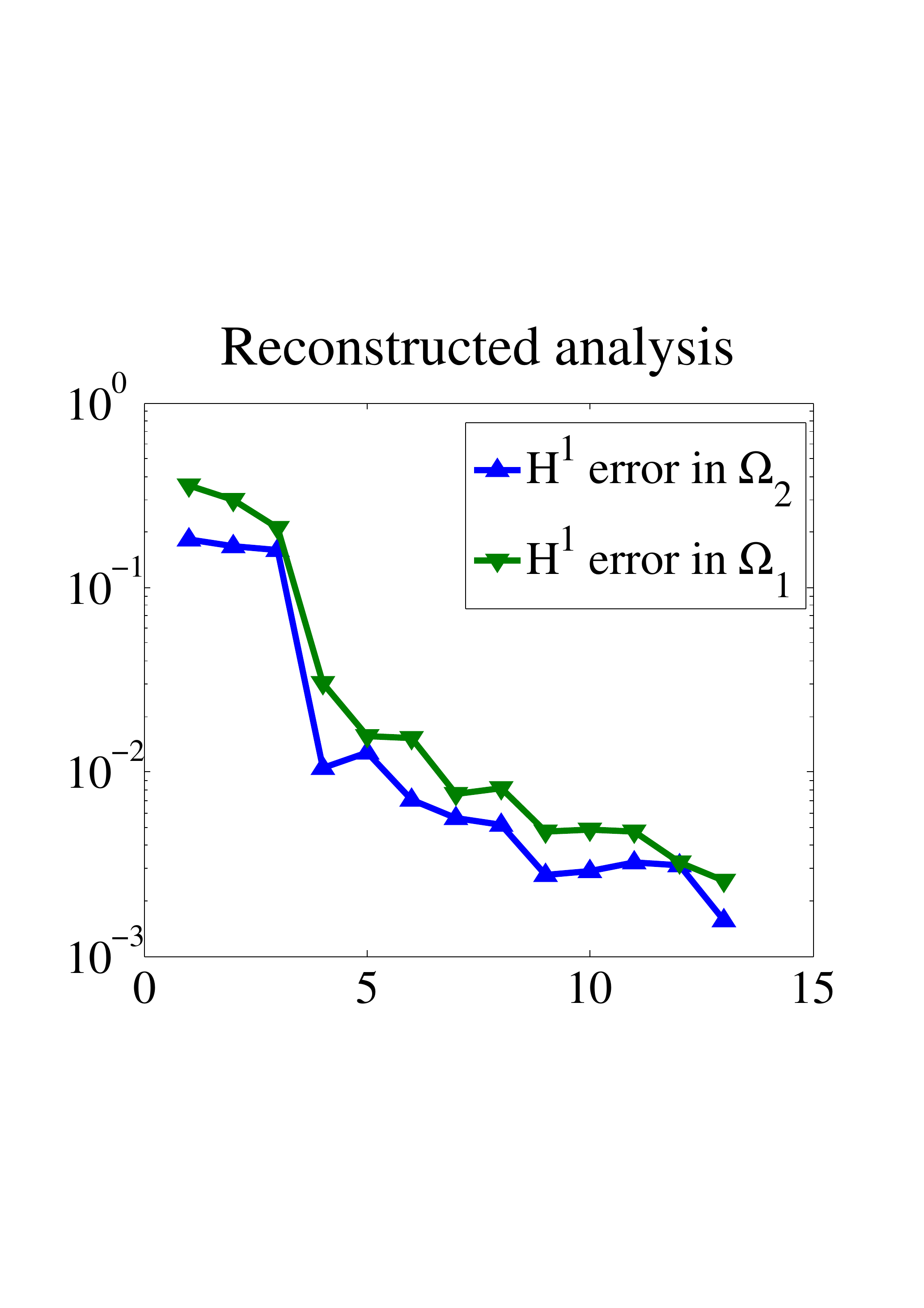}%}
  \caption{Reconstructed analysis --- error in $H^1$-norm over $\Omega_1$ and $\Omega_2$.}
 \end{figure}

\section{About noisy data}

In practical applications, data are measured with an intrinsic noise due to physical limitations of the sensors. In some sense, the noisy data acquired from the sensors are exact acquisitions from a noisy function that we consider to be a Markovian random field with spacial values locally dependent (on the support of the sensor) and globally independent (from one sensor to the others).
An extension of the previous development needs therefore to be done in order to take this fact under consideration.

Let us assume that all the sensors are subject to the same noise, i.e. provide averages  --- or some moments --- computed, not from $\varphi$, but from a random process $\varphi_\varepsilon \simeq  \mathcal{N}(\varphi, \varepsilon^2)$. The norm of the GEIM operator being equal to $\Lambda_M$ the GEIM-reconstruction forms a random process ${\mathcal J}_M[\varphi_\varepsilon] \simeq \mathcal{N}({\mathcal J}_M[\varphi], \Lambda^2_M \varepsilon^2)$ due to linearity.

Even though the Lebesgue constant seems to be small in practice, we would like to use all the data that are available in order to get a better knowledge of $\varphi$. For the definition of ${\mathcal J}_M$ we indeed only use $M$ data selected out of a large set of all data. 
For this purpose, let us consider that, with some greedy approaches, we have determined $P$ independent series of $M$ different captors $\{ \sigma^{(p)}_1 , \sigma^{(p)}_2 , \dots , \sigma^{(p)}_M  \} ,\ \forall 1\leq p \leq P$. For each of these series, the GEIM  applied to $\varphi$ is noisy and each application provides ${\mathcal J}^p_M[\varphi_\varepsilon] \simeq \mathcal{N}({\mathcal J}^p_M[\varphi], {\Lambda^{p}_M}^2 \varepsilon^2)$. We shall use these $P$ reconstructions by averaging them and expect to improve the variance of the reconstruction.

Let $\lambda^{-1}=\frac{1}{P}\sum\limits^{P}_{p=1} \frac{1}{\Lambda^{p}_N}$. Since the $P$ realizations: $\{{\mathcal J}^p_M[\varphi_\varepsilon]\}_p$ are independent, then the random variable $\overline{{\mathcal J}^P_M}(\varepsilon)  = \frac{\lambda}{P} \sum\limits_{p=1}^{P}\frac{{\mathcal J}^p_M[\varphi_\varepsilon] }{\Lambda^{(p)}_N}$ follows a Gaussian Markov random field of parameters $\mathcal{N}(\mathcal{J}_N (\varphi) , \frac{\epsilon^2 \lambda^2}{P} )$. A realization of this random process could be chosen for an improved estimate of $\mathcal{J}_M (\varphi)$. Indeed, the law of the error  follows $\mathcal{N}(0,\frac{\epsilon^2 \lambda^2}{P})$ and its variance can be less than the size of the initial noise on the captors ($\epsilon$) provided that $\Lambda^{(p)}_N < \sqrt{P} ,\forall 1\leq p \leq P$, which, from the numerical experiments, seems to be the case.

 \section{Conclusions}

We have presented a generalization of the Empirical Interpolation Method, based on ad'hoc interpolating functions and data acquired from sensors of the functions to be represented as those that can arise from data assimilation. We think that the GEIM is already interesting per se as it allows to select in a greedy way the most informative sensors one after the other. It can also propose, in case this is feasible, to build better sensors in order to complement a given family of existing ones and/or detect in which sense some of them are useless because redundant. Finally we also explain how noise on the data can be filtered out.

The coupled use of GEIM with reduced domain simulation is also proposed based on  domain decomposition technique leading to a small portion where numerical simulation is performed and a larger one based on data assimilation.
 
We think that the frame presented here can be used as an alternative to classical Bayesian or frequentistic statistic where the knowledge developed on the side for building mathematical models and their simulations can be fully used for data mining (we refer also to \cite{patera} and \cite{rozza} for recent contributions in this direction).

 \section*{Acknowledgements}

This work was supported in part by the joint research program MANON between CEA-Saclay and University Pierre et Marie Curie-Paris 6. We want to thank G. Biot from LSTA and G. Pag\`es from LPMA for constructive discussions on the subject.

\end{document}